\documentclass[12pt]{article}
\usepackage{latexsym,amssymb,upref,amsmath,amsthm, amsfonts,authblk}
\usepackage{amssymb,amsmath,amsthm, calc, graphicx}
\usepackage{epsfig}
\usepackage{breqn}
\usepackage{footnpag}
\usepackage{rotating}
\usepackage{amsfonts}
\usepackage{setspace}
\usepackage{fullpage}
\usepackage{enumitem}
\usepackage{bbold}
\usepackage{comment}
\usepackage{pgf,tikz}
\usepackage{mathrsfs}
\usetikzlibrary{arrows}
\usepackage{yhmath}
\usepackage{hyperref}
\usepackage{authblk}
\usepackage{mathrsfs}

\newtheorem{thm}{Theorem}

\newtheorem{proposition}[thm]{Proposition}
\newtheorem{prop}[thm]{Proposition}

\newtheorem{notation}[thm]{Notation}

\newtheorem{rem}[thm]{Remark}

\newcommand{\abs}[1]{\left\lvert{#1}\right\rvert}

\newcommand{\ignore}[1]{}

\title{An improvement on the maximum number of $k$-Dominating Independent Sets}

\linespread{1}
\pagestyle{plain}
\begin{document}

\author{
D\'aniel Gerbner $^{a}$
\qquad
Bal\'azs Keszegh $^{a}$
\qquad
Abhishek Methuku $^{b}$\\ 

Bal\'azs Patk\'os $^{a}$
\qquad
M\'at\'e Vizer $^{a}$}

\date{
\today}

\maketitle

\begin{center}
$^a$ Alfr\'ed R\'enyi Institute of Mathematics, HAS, Budapest, Hungary\\
\small H-1053, Budapest, Re\'altanoda utca 13-15.\\
\medskip
$^b$ Central European University, Budapest, Hungary \\
\small H-1051, Budapest, N\'ador utca 9.\\
\medskip
\small \texttt{gerbner,keszegh,patkos@renyi.hu, abhishekmethuku,vizermate@gmail.com}
\medskip
\end{center}

\begin{abstract}

Erd\H{o}s and Moser raised the question of determining the maximum number of maximal cliques or equivalently, the maximum number of maximal independent sets in a graph on $n$ vertices. Since then there has been a lot of research along these lines.

A $k$-dominating independent set is an independent set $D$ such that every vertex not contained in $D$ has at least $k$ neighbours in $D$.
Let $mi_k(n)$ denote the maximum number of $k$-dominating independent sets in a graph on $n$ vertices, and let $\zeta_k:=\lim_{n \rightarrow \infty} \sqrt[n]{mi_k(n)}$. 
Nagy initiated the study of $mi_k(n)$.

In this article we disprove a conjecture of Nagy and prove that for any even $k$ we have $$1.489 \approx \sqrt[9]{36} \le \zeta^k_k.$$

We also prove that for any $k \ge 3$ we have $$\zeta_k^{k} \le 2.053^{\frac{1}{1.053+1/k}}< 1.98,$$
improving the upper bound of Nagy.


\end{abstract}

\vspace{4mm}

\noindent
{\bf Keywords:} independent sets, $k$-dominating sets, almost twin vertices

\noindent
{\bf AMS Subj.\ Class.\ (2010)}: 05C69

\section{Introduction}

Let $G = G(V, E)$ be a simple graph. For any vertex $v \in V(G)$ let us denote by $d(v)$ the degree of $v$, $N(v)$ denotes the set of neighbors of $v$, also called the open neighborhood of $v$ and $N[v]$ denotes the closed neighborhood, i.e. $N[v] := N(v) \cup \{v\}$.

A subset $I \subset V(G)$ is called \textit{independent} if it does not induce any edges. A \textit{maximal independent} set is an independent set which is not a proper subset of another independent set (that is, it cannot be extended to a bigger independent set). 
A subset $D \subset V(G)$ is a \textit{dominating} set in G if each vertex in $V(G) \setminus D$ is adjacent to at
least one vertex of D, that is, $$\forall v \in  V(G)\setminus D: \ |N(v) \cap D| \ge 1.$$

Erd\H{o}s and Moser raised the question to determine the maximum number of maximal cliques that an $n$-vertex graph might contain. By taking complements, one sees that it is the same as the maximum number of maximal independent sets an $n$-vertex graph can have.  A dominating and independent set $W$ of vertices is often called a \textit{kernel} of the graph (due to Morgenstern and von Neumann \cite{MvN2007}) and clearly, a subset $W$ is a kernel if and only if it is a maximal independent set.

The problem of finding the maximum possible number of kernels has been resolved in many graph families. To state (some of) these results, let $mi_1(n)$ denote the maximum number of maximal independent sets in graphs of order $n$, and let $mi_1(n, \mathcal{F})$ denote the maximum number of maximal independent sets in the $n$-vertex members of the graph
family $\mathcal{F}$. Answering the question of Erd\H{o}s and Moser, Moon and Moser proved the following well known theorem.

\begin{thm}(Moser, Moon, \cite{MM1965})\label{mm} We have

\begin{displaymath}
mi_1(n)=
\left\{ \begin{array}{l l}
3^{n/3} & \textrm{if } n \equiv 0 \ (mod \ 3)\\
\frac{4}{3} \cdot 3^{\lfloor n/3 \rfloor} & \textrm{if } n \equiv 1 \ (mod \ 3)\\
2 \cdot 3^{\lfloor n/3 \rfloor} & \textrm{if } n \equiv 2 \ (mod \ 3)\\
\end{array}
\right.
\end{displaymath}

\end{thm}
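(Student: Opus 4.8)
The statement splits into a lower bound, which I would handle by explicit constructions, and an upper bound, which is the real content and which I would prove by induction on $n$.

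For the lower bound: when $n\equiv 0\pmod 3$, take $G$ to be a disjoint union of $n/3$ triangles; a vertex set is a maximal independent set exactly when it picks one vertex from each triangle, giving $3^{n/3}$ of them. When $n\equiv 2\pmod 3$, replace one triangle by a single edge (a factor $2$ instead of $3$), giving $2\cdot 3^{\lfloor n/3\rfloor}$; when $n\equiv 1\pmod 3$, use $\lfloor n/3\rfloor-1$ triangles together with two disjoint edges --- equivalently one $K_4$, whose maximal independent sets are its four singletons --- giving $4\cdot 3^{\lfloor n/3\rfloor-1}=\tfrac43\,3^{\lfloor n/3\rfloor}$. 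Optimality of these will come out of the upper bound.

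For the upper bound, write $f(n):=mi_1(n)$, let $B(n)$ denote the claimed value, and note that both are nondecreasing (adding an isolated vertex changes nothing). The key lemma is that for any vertex $w$ of $G$, the maximal independent sets of $G$ containing $w$ are in bijection (via $I\mapsto I\setminus\{w\}$) with the maximal independent sets of $G-N[w]$: independence forces $I\setminus\{w\}\subseteq V\setminus N[w]$, and any vertex that could be added to $I\setminus\{w\}$ inside $G-N[w]$ could also be added to $I$ inside $G$. Since a maximal independent set is dominating, it meets $N[v]$ for every vertex $v$, so choosing $v$ of minimum degree $\delta$ we get
$$f(G)\ \le\ \sum_{w\in N[v]}f\bigl(G-N[w]\bigr)\ \le\ \sum_{w\in N[v]}B\bigl(n-d(w)-1\bigr),$$
the second inequality by the inductive hypothesis and monotonicity. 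Now I would check the small base cases and split on $\delta$. If $\delta=0$, then $f(G)=f(G-v)\le B(n-1)\le B(n)$. If $\delta\ge 2$, every $w\in N[v]$ has $d(w)\ge 2$, so each summand is $\le B(n-3)=\tfrac13 B(n)$; since $|N[v]|=\delta+1$ this yields $f(G)\le 3B(n-3)=B(n)$ for $\delta=2$ (equality forcing disjoint triangles), and for $\delta\ge 3$ it reduces to the numerical claim $(\delta+1)B(n-\delta-1)\le B(n)$, which holds because $B(n)/B(n-k)\ge 3^{\lfloor k/3\rfloor}(4/3)^{k\bmod 3}\ge k$ once $k=\delta+1\ge 4$.

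The delicate case is $\delta=1$, where $v$ has a unique neighbour $u$: here the crude estimate $f(G)\le B(n-2)+B(n-d(u)-1)\le 2B(n-2)$ only gives $2^{n/2}$, which is too weak. I would split further. If $\{u,v\}$ is a connected component (so $d(u)=1$ too), each maximal independent set of $G-\{u,v\}$ extends in exactly two ways, hence $f(G)=2f(n-2)\le 2B(n-2)\le B(n)$, a one-line check in each residue class. Otherwise $d(u)\ge 2$, and since every maximal independent set contains $u$ or $v$ but not both (they are adjacent, and if $v\notin I$ then $I$ must dominate $v$, forcing $u\in I$), we get $f(G)\le B(n-2)+B(n-3)$, which is $\le B(n)$ in each residue class --- for $n\equiv 0$, say, $\tfrac43 3^{(n-3)/3}+3^{(n-3)/3}=\tfrac73 3^{(n-3)/3}\le 3\cdot 3^{(n-3)/3}$. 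That completes the induction. I expect the main obstacle to be exactly this $\delta=1$ refinement; the rest is bookkeeping across the three residues of $n$ modulo $3$.
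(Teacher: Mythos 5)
The paper does not prove this theorem: it is quoted from Moon and Moser \cite{MM1965} as known background, so there is no in-paper argument to compare against. Your proof is correct and is essentially the standard one for this result --- the recursion $mi_1(G)\le\sum_{w\in N[v]}mi_1(G-N[w])$ for a minimum-degree vertex $v$, justified by the bijection $I\mapsto I\setminus\{w\}$ and the fact that a maximal independent set dominates $v$, is exactly the inductive scheme (attributed to F\"uredi-style arguments) that the paper later invokes in generalized form for Case 1 of Theorem \ref{upper}. Your case analysis checks out: the ratios $B(n)/B(n-k)\ge 3^{\lfloor k/3\rfloor}(4/3)^{k\bmod 3}$ handle $\delta\ge 3$, the factor-$3$ step handles $\delta=2$, and you correctly identified that $\delta=1$ needs the refinement via ``exactly one of $u,v$ lies in each maximal independent set,'' which closes the induction in all three residue classes. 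The one caveat is cosmetic: the stated formula gives $4/3$ at $n=1$, so the equality claim implicitly assumes $n\ge 2$, and your base-case check should record that $B$ is still a valid upper bound at $n\in\{0,1\}$ so the induction can bottom out there.
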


Moreover, they obtained the extremal graphs. If addition and multiplication by a positive integer denotes taking vertex disjoint union, then Moser and Moon proved that the equality is attained if and only if the graph $G$ is isomorphic to the graph $n/3$ $K_3$ (if $n \equiv 0$ (mod 3)); to one of the graphs $(\lfloor n/3 \rfloor -1)$ $K_3$ + $K_4$ or $(\lfloor n/3 \rfloor -1)$ $K_3$ + 2 $K_2$ (if $n \equiv 1$ (mod 3)); $\lfloor n/3 \rfloor$ $K_3$ + $K_2$ (if $n \equiv 2$ (mod 3)).

\vskip 0.15truecm
For the family of connected graphs the analogous question was raised by Wilf \cite{W1986} and answered by the following result.

\begin{thm}(F\"uredi \cite{F1987}, Griggs, Grinstead, Guichard \cite{GGG1988}) Let $\mathcal{F}_{con}$ be the family of con-
nected graphs. Then

\begin{displaymath}
mi_1(n, \mathcal{F}_{con})=
\left\{ \begin{array}{l l}
\frac{2}{3} \cdot 3^{n/3} + \frac{1}{2} \cdot 2^{n/3}& \textrm{if } n \equiv 0 \ (mod \ 3)\\
3^{\lfloor n/3 \rfloor} + \frac{1}{2} \cdot 3^{\lfloor n/3 \rfloor}& \textrm{if } n \equiv 1 \ (mod \ 3)\\
\frac{4}{3} \cdot 3^{\lfloor n/3 \rfloor} + \frac{4}{3} \cdot 3^{\lfloor n/3 \rfloor}& \textrm{if } n \equiv 2 \ (mod \ 3)\\
\end{array}
\right.
\end{displaymath}

\end{thm}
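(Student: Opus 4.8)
The plan is to prove matching lower and upper bounds: the lower bound from an explicit family of connected graphs, the upper bound from a Moon--Moser-type deletion recursion strengthened so that it also controls disconnected graphs (for which $mi_1$ is the product of the $mi_1$'s of the components) and so that the gain coming from connectivity is quantified. The reason the connected optimum falls below the value $3^{n/3}$ of Theorem~\ref{mm} is precisely that a disjoint union of triangles maximises that product, so any connected graph must pay for deviating from this shape.

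\emph{Lower bound.} The leading term $\tfrac23\cdot 3^{n/3}=2\cdot 3^{n/3-1}$ tells us the extremal graph should be ``$n/3$ triangles joined as cheaply as possible.'' For $n=3m$ I would take $m-1$ pairwise disjoint triangles together with one further triangle $\{a,b,c\}$, joining $a$ by a single edge to one vertex of each of the $m-1$ triangles; this graph is connected with $3m$ vertices. A maximal independent set of it either contains $a$ --- then no neighbour of $a$ lies in it, each of the $m-1$ triangles contributes one of its two remaining vertices, and there are $2^{m-1}$ such sets --- or it avoids $a$, and maximality then forces exactly one of $b,c$ into the set while each of the $m-1$ triangles is free to contribute any of its three vertices, giving $2\cdot3^{m-1}$ sets; in total $2\cdot 3^{m-1}+2^{m-1}=\tfrac23 3^{n/3}+\tfrac12 2^{n/3}$. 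For $n\equiv1$ or $n\equiv2$ I would replace one pendant triangle by the appropriate small Moon--Moser gadget ($K_4$, a $2K_2$, or a single $K_2$, as in the extremal graphs for Theorem~\ref{mm}) and redo the same two-case count to obtain the stated formulas.

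\emph{Upper bound.} I would induct on $n$ for connected $G$, using Theorem~\ref{mm} freely for the disconnected graphs that arise. Fix a vertex $v$ of minimum degree $d$ with neighbours $u_1,\dots,u_d$; since every maximal independent set meets $N[v]$,
\[
mi_1(G)\ \le\ mi_1\bigl(G-N[v]\bigr)+\sum_{i=1}^{d} mi_1\bigl(G-N[u_i]\bigr),
\]
and each deleted closed neighbourhood has at least $d+1$ vertices. When $d\le1$ one reduces directly (a degree-$0$ vertex lies in every maximal independent set; a leaf is deleted with its neighbour). When $d\ge2$, bounding each of the $d+1$ summands by the crude $3^{(\cdot)/3}$ already reproduces $mi_1(G)\le 3^{n/3}$; to reach the sharper connected bound one must do strictly better on at least one summand, and here connectivity enters: deleting $N[u_i]$ cannot leave only complete triangles for \emph{every} $i$, so some resulting graph contains a $K_2$, an isolated vertex, or a connected piece that is itself strictly sub-extremal, and quantifying this loss is what turns $3^{n/3}$ into $\tfrac23 3^{n/3}+\tfrac12 2^{n/3}$. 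The secondary term is kept under control by always choosing $v$ inside a triangle when possible, so that $G-N[v]$ is again (close to) the conjectured extremal shape --- with this choice the recursion is in fact tight on the construction above.

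\emph{Main obstacle.} The genuinely delicate part is that last step. The graphs $G-N[u_i]$ are typically disconnected, the naive induction only recovers Moon--Moser, and one has to understand precisely how $mi_1$ degrades in terms of the number and sizes of the components, while simultaneously carrying enough extra information in the inductive hypothesis to nail the \emph{exact} secondary constant $\tfrac12 2^{n/3}$ rather than merely the leading term. This forces a finite but nontrivial case analysis of the local configurations in which the deletion recursion is tight (short chains of triangles, degree-$2$ vertices lying in a triangle, cut vertices, low-connectivity pieces), together with a careful treatment of the small base cases and of the two remaining residue classes modulo $3$.
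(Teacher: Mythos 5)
First, a point of reference: the paper does not prove this theorem at all --- it is quoted as background from F\"uredi \cite{F1987} and Griggs--Grinstead--Guichard \cite{GGG1988} --- so there is no internal proof to compare yours against. Judged on its own terms, your lower bound for $n\equiv 0 \pmod 3$ is correct and complete: the graph consisting of $m-1$ disjoint triangles attached by single edges to one vertex $a$ of an $m$-th triangle is connected, and your two-case count ($a$ in the set versus $a$ not in the set) correctly yields $2\cdot 3^{m-1}+2^{m-1}=\tfrac23\,3^{n/3}+\tfrac12\,2^{n/3}$. (Incidentally, your computation exposes that the displayed formulas for $n\equiv 1,2$ in the paper must contain typos: as printed they equal $\tfrac32\,3^{\lfloor n/3\rfloor}$ and $\tfrac83\,3^{\lfloor n/3\rfloor}$, both of which exceed the unrestricted maximum $mi_1(n)$ from Theorem~\ref{mm}, which is impossible for a subfamily; the second summands should be powers of $2$, consistent with what your construction produces.)

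The genuine gap is the upper bound, which is the entire content of the theorem. Your recursion $mi_1(G)\le mi_1(G-N[v])+\sum_i mi_1(G-N[u_i])$ is the right starting point and does recover $3^{n/3}$, but the step that improves this to the exact connected value --- showing that connectivity forces at least one summand to be strictly sub-extremal, and by exactly enough to produce the secondary term $\tfrac12\,2^{n/3}$ --- is stated as a goal rather than carried out. You acknowledge this yourself under ``Main obstacle'': the required analysis of how $mi_1$ of the (typically disconnected) graphs $G-N[u_i]$ degrades with their component structure, the strengthened inductive hypothesis needed to track the exact lower-order constant, and the finite case analysis of tight local configurations are all deferred. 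Since a proof plan that defers precisely the hard quantitative step is not a proof, the upper-bound direction (and with it the $n\equiv 1,2$ cases of the lower bound, which you also leave as ``redo the same count'') remains unestablished.
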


The extremal graphs are determined as well. In these graphs, there is a vertex of maximum degree, and its removal yields a member of the extremal graphs list of the previous theorem.

Wilf \cite{W1986} and Sagan \cite{S1988} investigated the case of trees and proved the following theorem.

\begin{thm} Let $\mathcal{T}$ be the family of trees. Then we have

\begin{displaymath}
mi_1(n, \mathcal{T})=
\left\{ \begin{array}{l l}
\frac{1}{2} \cdot 2^{n/2} + 1& \textrm{if } n \equiv 0 \ (mod \ 2)\\
2^{\lfloor n/2 \rfloor} & \textrm{if } n \equiv 1 \ (mod \ 2)\\

\end{array}
\right.
\end{displaymath}

\end{thm}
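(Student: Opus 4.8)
My plan is to prove the bound in two halves: exhibit extremal trees for the lower bound, and prove the upper bound by induction on $n$ using a leaf‑deletion recursion. For the lower bound, for odd $n=2m+1$ I would take the tree obtained from a single vertex $c$ by attaching $m$ pendant paths $c-a_i-b_i$; a maximal independent set containing $c$ is forced to equal $\{c,b_1,\dots,b_m\}$, while one avoiding $c$ chooses one endpoint of each $\{a_i,b_i\}$ subject only to some $a_i$ being chosen, giving $2^m-1$ of them, for a total of $2^m=2^{(n-1)/2}$. For even $n=2m$ I would instead take an edge $c_1c_2$ with $m-1$ pendant paths attached at $c_1$: the sets containing $c_1$ reduce to the single set $\{c_1,b_1,\dots,b_{m-1}\}$, and those avoiding $c_1$ must contain $c_2$ (which then also dominates $c_1$), with the $m-1$ pendant paths contributing a free factor $2$ each, giving $2^{m-1}+1=\tfrac12\cdot 2^{n/2}+1$. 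These counts match the claimed formula.

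For the upper bound the engine is the identity: if $v$ is a leaf of a tree $T$ with neighbour $u$, then $mi_1(T)=mi_1(T-N[u])+mi_1(T-N[v])$. The first term counts the maximal independent sets containing $u$ (deleting $u$ and its neighbours is a bijection onto the maximal independent sets of $T-N[u]$), and the second counts those avoiding $u$: such a set must contain the leaf $v$, which then also dominates $u$, so deleting $u$ and $v$ is a bijection onto the maximal independent sets of $T-N[v]=T-u-v$. I would also record the crude fact that any forest on $N$ vertices has at most $2^{\lfloor N/2\rfloor}$ maximal independent sets, which follows from the theorem applied component‑by‑component together with $\sum\lfloor n_i/2\rfloor\le\lfloor\sum n_i/2\rfloor$; inside an induction on $n$ this comes for free from the inductive hypothesis for smaller trees.

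Now induct on $n$. Trees on at most three vertices, and all stars, have at most two maximal independent sets, which is within the bound. Otherwise take a longest path $v_0v_1\cdots v_\ell$ with $\ell\ge 3$ and set $u=v_1$, $v=v_0$; maximality of the path forces every neighbour of $u$ other than $v_2$ to be a leaf, so $u$ has exactly one non‑leaf neighbour $w:=v_2$ and $t:=\deg(u)-1\ge1$ leaf‑neighbours. Let $C$ be the component of $T-u$ containing $w$, a tree on $s:=n-t-1$ vertices. Then $T-N[v]$ is $C$ together with $t-1$ isolated vertices and $T-N[u]=C-w$, so the recursion becomes $mi_1(T)=mi_1(C)+mi_1(C-w)\le g(s)+2^{\lfloor(s-1)/2\rfloor}$, where $g(\cdot)$ denotes the claimed formula, using the inductive hypothesis on the tree $C$ and the forest bound on the forest $C-w$. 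A short check gives $g(s)+2^{\lfloor(s-1)/2\rfloor}=g(s+2)$ for every $s\ge1$, and since $g$ is nondecreasing and $s+2=n-t+1\le n$, the right side is at most $g(n)$, closing the induction.

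The main obstacle is the even case: there the target $2^{n/2-1}+1$ is far smaller than the forest‑type bound $\approx 2^{n/2}$, so a lossy split cannot succeed. Everything hinges on the larger summand of the recursion reducing to an honest \emph{tree} $C$ on $n-t-1\le n-2$ vertices rather than merely a forest, so that the exact value — in particular the additive $+1$ — is transmitted via the identity $g(s)+2^{\lfloor(s-1)/2\rfloor}=g(s+2)$; note that equality forces $t=1$, i.e.\ a leaf whose neighbour has degree two, which is precisely the structure of the extremal even tree above. The remaining care is bookkeeping: getting the deletions $N[u]$ versus $N[v]$ and the component structure of $T-u$ exactly right, and cleanly disposing of the degenerate cases (stars, $\ell\le 2$, small $n$).
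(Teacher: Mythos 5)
Your proposal is correct, but note that the paper does not actually prove this theorem: it is quoted from Wilf and Sagan with a citation only, so there is no in-paper argument to compare against. Your argument is essentially the classical one from Sagan's note. The two pillars both check out: the exact leaf recursion $mi_1(T)=mi_1(T-N[u])+mi_1(T-u-v)$ is a genuine identity (both maps you describe are bijections, since a maximal independent set avoiding $u$ must contain the leaf $v$, and adjoining $v$ to a maximal independent set of $T-u-v$ always yields a maximal independent set of $T$), and the longest-path choice guarantees that $v_1$ has exactly one non-leaf neighbour $v_2$, so the dominant summand $mi_1(T-u-v)$ reduces to a single \emph{tree} $C$ on $n-t-1$ vertices (plus isolated vertices), which is exactly what lets the additive $+1$ in the even case survive the induction via the identity $g(s)+2^{\lfloor (s-1)/2\rfloor}=g(s+2)$ and the monotonicity of $g$. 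The constructions match the stated values, and the degenerate cases (stars, $n\le 3$) are correctly disposed of since a non-star tree on at least $4$ vertices has diameter at least $3$. In short: a complete and correct proof of a statement the paper only cites.
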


Hujter and Tuza determined the maximal number of kernels in triangle free graphs by proving the following result.

\begin{thm}(\cite{HT1993}) 
Let $\mathcal{T}_\Delta$ be the family of triangle-free graphs. Then for any integer $n\ge 4$ we have

\begin{displaymath}
mi_1(n, \mathcal{T}_\Delta)=
\left\{ \begin{array}{l l}
2^{n/2} & \textrm{if } n \equiv 0 \ (mod \ 2)\\
5 \cdot 2^{(n-5)/2} & \textrm{if } n \equiv 1 \ (mod \ 2)\\

\end{array}
\right.
\end{displaymath}
\end{thm}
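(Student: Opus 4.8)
The plan is to prove the two inequalities separately: the lower bound by exhibiting a triangle-free graph with the required number of maximal independent sets, and the upper bound by induction on $n$; throughout write $f(n)$ for the claimed value. For the lower bound, if $n$ is even take $G=\frac{n}{2}K_2$, a perfect matching; it is triangle-free, and since a maximal independent set of a vertex-disjoint union is obtained by choosing, independently in each component, a maximal independent set of that component, and $K_2$ has exactly two maximal independent sets, $G$ has $2^{n/2}=f(n)$ of them. If $n$ is odd take $G=C_5+\frac{n-5}{2}K_2$; the $5$-cycle is triangle-free and its maximal independent sets are exactly the five pairs of vertices at distance $2$ in it, so $G$ has $5\cdot 2^{(n-5)/2}=f(n)$ maximal independent sets.

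For the upper bound I would induct on $n$, handling $n=4,5$ and a short list of further small values directly. For the step pick a vertex $v$ of minimum degree $\delta$. If $\delta=0$ then $v$ lies in every maximal independent set, so $mi(G)=mi(G-v)\le f(n-1)\le f(n)$. If $\delta\ge 1$, write $N(v)=\{u_1,\dots,u_\delta\}$ and split the maximal independent sets $I$ according to whether $v\in I$. Those with $v\in I$ are in bijection with the maximal independent sets of $G-N[v]$, which is triangle-free on $n-1-\delta$ vertices. For those with $v\notin I$, the set $I$ meets $N(v)$; if $i$ is least with $u_i\in I$, then $I\mapsto I\setminus\{u_i\}$ injects these sets into the maximal independent sets of $\bigl(G-N[u_i]\bigr)-\{u_1,\dots,u_{i-1}\}$, and the place where triangle-freeness is used is that $N(v)$ is independent, so $u_1,\dots,u_{i-1}\notin N[u_i]$ and this graph has at most $n-1-d(u_i)-(i-1)\le n-1-\delta-(i-1)$ vertices. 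Using monotonicity of $f$ this yields
\[
mi(G)\ \le\ f(n-1-\delta)+\sum_{j=0}^{\delta-1}f\bigl(n-1-\delta-j\bigr),
\]
and it remains to check that the right-hand side is at most $f(n)$. For $\delta\ge 4$ this is immediate from $f(m)\le 2^{m/2}$ and $2+\sum_{j\ge 1}2^{-j/2}=3+\sqrt2<2^{(1+\delta)/2}$, so the geometric tail is comfortably absorbed. For $\delta\in\{1,2,3\}$ one must argue more carefully, using that every $u_i$ also has degree at least $\delta$ (so the graphs in the sum shrink faster than the crude bound), and for $\delta=1$ separating the tight case in which $u_1$ is a leaf — then $\{u_1,v\}$ spans a $K_2$-component, $mi(G)=2\,mi(G-\{u_1,v\})$, and $2f(n-2)=f(n)$ holds with equality — from the case $d(u_1)\ge 2$, which leaves slack.

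The structure above is routine; the real work is that there is essentially no room near equality. Since $f(n-2)=\tfrac12 f(n)$ for even $n$, the low-degree cases $\delta\in\{1,2,3\}$ must be treated with exact rather than asymptotic estimates, and the small-$n$ base cases must be verified carefully so that the recursion never terminates below four vertices at a bad value. If one additionally wants the extremal graphs (vertex-disjoint unions of $K_2$'s, together with exactly one $C_5$ when $n$ is odd), the same case analysis has to be sharpened to show these are the only optimizers, which accounts for most of the remaining effort.
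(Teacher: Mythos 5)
This theorem is quoted in the paper as background from Hujter and Tuza \cite{HT1993}; the paper contains no proof of it, so there is no internal argument to compare yours against. Judged on its own, your lower bound is complete and correct: $\frac{n}{2}K_2$ and $C_5+\frac{n-5}{2}K_2$ are triangle-free, maximal independent sets of a disjoint union factor over the components, and $C_5$ has exactly $5$ maximal independent sets.

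The upper bound, however, has a genuine gap exactly where you park the difficulty. Your recursion $mi(G)\le f(n-1-\delta)+\sum_{j=0}^{\delta-1}f(n-1-\delta-j)$ is fine (the injection for $v\notin I$ does use triangle-freeness correctly, since $N(v)$ is independent), and your closing estimate works for $\delta\ge 4$ and, by direct computation, also for $\delta=3$ and for $\delta=1$. But for $\delta=2$ and $n$ odd it fails: using $f(m)\le 2^{m/2}$ the right-hand side is $2f(n-3)+f(n-4)\le\bigl(2\cdot 2^{-3/2}+2^{-2}\bigr)2^{n/2}\approx 0.957\cdot 2^{n/2}$, whereas $f(n)=5\cdot 2^{(n-5)/2}=\frac{5}{4\sqrt2}\,2^{n/2}\approx 0.884\cdot 2^{n/2}$. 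The extra information you invoke (that each $u_i$ has degree at least $\delta$) is already built into that bound, so no further mileage comes from it; one needs a genuinely new structural idea for minimum degree $2$ in triangle-free graphs (this is precisely where $C_5$ lives and where Hujter and Tuza do their real work, distinguishing whether $N(v)$ has a common second neighbour, tracking components isomorphic to $K_2$ and $C_5$, etc.). A second, smaller issue is that your recursion can land on fewer than $4$ vertices, where the displayed formula is not the correct value of $mi_1(\cdot,\mathcal{T}_\Delta)$ (e.g.\ it gives $5/2$ at $n=3$ and $\sqrt2$ at $n=1$), so the base cases and the monotonicity claims about $f$ need to be set up with an explicitly defined, correct $f$ on $n\le 3$. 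As written, the proposal is a correct plan with the decisive case unproved rather than a proof.
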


Other related results can be found in the survey of Chang and Jou \cite{CJ1995}.
\vskip 0.3truecm
There are lots of variants of domination studied in the literature. A quite natural and often considered one is $k$-domination. A set $D$ is called \textit{k-dominating} if each
vertex in $V(G) \setminus D$ is adjacent to at least $k$ vertices of $D$. In other words, $$\forall v \in V(G)\setminus D: \ |N(v) \cap D| \ge k.$$

 A $k$-dominating independent set is called a $k$-DIS for short. Note that 1-DISes are exactly maximal independent sets. This notion was introduced by W\l och \cite{W2012}. Nagy \cite{N2017E,N2017} addressed the problem of determining the maximum number of $k$-dominating independent sets (for a given $k \ge 2$) in an $n$-vertex graph. Generalizing $mi_1(n)$ and $mi_1(\mathcal{F})$ we introduce the following notation.

\begin{notation}
For $n,k \ge 1$ let $mi_k(n)$ denote the maximum number of $k$-$DIS$es in graphs of order $n$, and let $mi_k(n, \mathcal{F})$ denote the maximum number of $k$-$DIS$es in an $n$-vertex graph from the family $\mathcal{F}$. If $\mathcal{F}$ consists of a single graph $G$, we denote by $mi_k(G)$ the number of $k$-$DIS$es in $G$.
\end{notation}

In \cite{N2017} Nagy proved that for all $ k \ge 1$ $$\zeta_k:=\lim_{n \rightarrow \infty} \sqrt[n]{mi_k(n)}$$ exists.  
Theorem \ref{mm} implies 
$\zeta_1=\sqrt[3]{3}$ and, by definition, for $k \ge 2$ we have $\zeta_k \in [1, \sqrt[3]{3}].$
The following upper and lower bounds were established on the values of $\zeta_k$.



\vspace{2mm}


\begin{thm}\label{zoligeneral}(Theorem 1.7 \cite{N2017}) For all $k \ge 3$ we have:

$$ \sqrt{2} \le \zeta_k^{k} \le 2^{\frac{k}{k+1}}.$$

\end{thm}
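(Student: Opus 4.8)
The plan is to prove the two inequalities by completely different means: the lower bound by an explicit vertex-disjoint construction, the upper bound by an inductive branching argument on the number of vertices. For the lower bound, I would first observe that $k$-DISes are multiplicative under disjoint unions: $D$ is a $k$-DIS of $G_1\sqcup G_2$ precisely when $D\cap V(G_i)$ is a $k$-DIS of $G_i$ for $i=1,2$, because every vertex of $V(G_1)\setminus D$ has all of its neighbours, in particular its $\ge k$ dominators, inside $G_1$. Hence $mi_k(G_1\sqcup G_2)=mi_k(G_1)\,mi_k(G_2)$, and so, padding with isolated vertices (which lie in every $k$-DIS), $\zeta_k\ge mi_k(H)^{1/|V(H)|}$ for every graph $H$. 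Taking $H=K_{k,k}$ with parts $A$ and $B$: every independent set is contained in $A$ or in $B$, and a proper subset of $A$ leaves a vertex of $A$ with no neighbour in the set, so the only $k$-DISes of $K_{k,k}$ are $A$ and $B$ — both genuinely $k$-dominating since each has size exactly $k$. Thus $mi_k(K_{k,k})=2$ on $2k$ vertices, giving $\zeta_k\ge 2^{1/(2k)}$, i.e.\ $\zeta_k^{k}\ge\sqrt2$. (The graph $K_{k,k,k}$ would already give the stronger $\zeta_k^k\ge 3^{1/3}$, but $K_{k,k}$ suffices for the stated bound.)

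For the upper bound I would prove, by induction on $n$, that $mi_k(n)\le 2^{n/(k+1)}$ — it is enough to prove this up to a subexponential factor — since that gives $\zeta_k\le 2^{1/(k+1)}$, hence $\zeta_k^{k}\le 2^{k/(k+1)}$. The engine is a restriction lemma: if $D$ is a $k$-DIS of $G$ and $W\subseteq D$, then $W$ is independent, $D\cap N(W)=\emptyset$, so $D=W\sqcup\bigl(D\cap(V\setminus N[W])\bigr)$, and $D\cap(V\setminus N[W])$ is a $k$-DIS of $G-N[W]$ — a vertex outside $N[W]$ has no neighbour in $W$, hence retains all $\ge k$ of its dominators. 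As $D$ is recovered from $W$ and this restriction, the number of $k$-DISes of $G$ containing a fixed $W$ is at most $mi_k(G-N[W])$. I would also use the elementary observations that every vertex of degree $<k$ lies in every $k$-DIS, and that every vertex $v\notin D$ admits an independent $k$-subset $W\subseteq N(v)\cap D$.

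Now pick a vertex $v$ of minimum degree $\delta$. If $\delta<k$, then $v$ lies in every $k$-DIS, so the restriction lemma with $W=\{v\}$ gives $mi_k(G)\le mi_k(G-N[v])\le mi_k(n-1)$ and the induction closes. If $\delta\ge k$, split the $k$-DISes by whether $v\in D$: those with $v\in D$ are counted by $mi_k(G-N[v])\le mi_k(n-\delta-1)$, and those with $v\notin D$ are counted — after fixing, say, the lexicographically first $k$ dominators $W\subseteq N(v)$ of $v$ — by $\sum_W mi_k(G-N[W])$, where each $N[W]$ has at least $k+\delta$ vertices (the $k$ vertices of $W$ together with all $\ge\delta$ neighbours of any single one of them, which are disjoint from $W$). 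The case $\delta=k$ is clean: then $v$ has exactly $k$ neighbours, all forced into $D$, so $W=N(v)$ is the unique option, and one gets $mi_k(n)\le mi_k(n-k-1)+mi_k(n-2k)\le 2^{n/(k+1)}$.

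The real difficulty is the case $\delta>k$: there are up to $\binom{\delta}{k}$ candidate sets $W$, and bounding $\sum_W mi_k(G-N[W])$ crudely by $\binom{\delta}{k}\,mi_k(n-k-\delta)$ is already too lossy for the induction to close at $\delta=k+1$. (Tellingly, the analogous bound is outright false for $k=1$, where $\zeta_1=3^{1/3}>\sqrt2$, so any correct argument must invoke $k\ge 3$ precisely here.) I would expect to handle this either by a sharper count of how many of the $\binom{\delta}{k}$ sets $W$ genuinely extend to a $k$-DIS — controlled via the independence structure of the (possibly dense) graph $G[N(v)]$ and the extra vertices removed when $N[W]$ is large — or by a separate treatment of graphs of minimum degree exceeding $k$, using that these impose strictly more domination constraints per vertex than $K_{k,k}$. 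This is the step where I expect the real work; the rest is routine bookkeeping of the recursion.
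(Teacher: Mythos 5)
Your lower bound is correct and is essentially the paper's own route to it: $K_{k,k}$ has exactly two $k$-DISes on $2k$ vertices, $k$-DISes multiply over disjoint unions, and isolated vertices lie in every $k$-DIS, which is precisely Proposition \ref{mkt} combined with $m(k,2)=2k$. (Note that the present paper does not prove the theorem you were given --- it quotes it from Nagy \cite{N2017} --- so the comparison below is with the argument the authors themselves invoke and then refine in their proof of Theorem \ref{upper}.)

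For the upper bound there is a genuine gap, and you have located it yourself: the case $\delta>k$ is exactly where the one essential idea lives, and your proposal does not supply it. The fix is not a sharper enumeration of the candidate sets $W\subseteq N(v)\cap D$ but a change of what is counted. Following F\"uredi \cite{F1987}, one double-counts incidences between $k$-DISes $D$ with $v\notin D$ and \emph{single} neighbours $u\in N(v)$ (the present paper refines this to pairs of neighbours in its inequality (\ref{bnd})): every such $D$ contains at least $k$ of the $\delta$ neighbours of $v$, so it is counted at least $k$ times in $\sum_{u\in N(v)}\#\{D: u\in D\}$, while each summand is at most $mi_k(n-\delta-1)$ because $u\in D$ forces $D\cap N(u)=\emptyset$ and $|N(u)|\ge\delta$. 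This yields
$$mi_k(n)\ \le\ mi_k(n-\delta-1)+\frac{\delta}{k}\,mi_k(n-\delta-1)\ =\ \frac{k+\delta}{k}\,mi_k(n-\delta-1),$$
with no $\binom{\delta}{k}$ factor anywhere; iterating gives $mi_k(n)\le c_0\max_{\delta}\{((k+\delta)/k)^{n/(\delta+1)}\}$, which is Proposition 5.1 of \cite{N2017} as quoted in Case 1 of the proof of Theorem \ref{upper}. The remaining work is the elementary fact that for $\delta\ge k$ the function $\delta\mapsto\frac{1}{\delta+1}\ln\frac{k+\delta}{k}$ is decreasing (hence maximised at $\delta=k$, giving $2^{1/(k+1)}$ per vertex) provided $\frac{k+1}{2k}\le\ln 2$, i.e.\ for $k\ge 3$ --- which is where the hypothesis $k\ge 3$ enters, confirming your suspicion that it must surface at exactly this point (for $k=1$ the maximum sits at $\delta=2$ and returns $3^{1/3}$). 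Without this averaging step your recursion cannot close for any $\delta\ge k+1$, so as written the proposal establishes only the lower bound.
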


\begin{thm}\label{zoli2}(Theorem 1.6 \cite{N2017}) We have

$$1.489 \approx \sqrt[9]{36} \le \zeta^{2}_2 \le \sqrt[5]{9} \approx 1.551.$$

\end{thm}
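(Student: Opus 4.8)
The statement has two independent halves, which I would attack by entirely different means: the lower bound by an explicit gadget together with a product construction, and the upper bound by a branching recursion that exploits the $2$-domination demand. Both rest on the multiplicativity of $mi_2$ under disjoint union: if $G=H_1\sqcup H_2$, then a set $D$ is a $2$-DIS of $G$ if and only if $D\cap V(H_i)$ is a $2$-DIS of $H_i$ for $i=1,2$, since every vertex has all of its neighbours inside its own component. Hence $mi_2(H_1\sqcup H_2)=mi_2(H_1)\cdot mi_2(H_2)$, and taking $m$ disjoint copies of a fixed graph $H$ on $t$ vertices gives $mi_2(tm)\ge mi_2(H)^m$, so $\zeta_2\ge mi_2(H)^{1/t}$. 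Thus a single good gadget suffices for the lower bound.

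For the lower bound I would use the $3\times 3$ rook's graph $R$ (equivalently, the line graph of $K_{3,3}$): its vertices are the cells of the $3\times 3$ grid, and two cells are adjacent exactly when they share a row or a column. Then $R$ has $9$ vertices, independence number $3$, and its independent sets of size $3$ are precisely the $6$ transversals $D=\{(i,\sigma(i)):i\}$. Each such transversal is a $2$-DIS: a cell $(i,j)$ with $j\neq\sigma(i)$ has exactly two neighbours in $D$, namely its row-mate $(i,\sigma(i))$ and its column-mate $(\sigma^{-1}(j),j)$, and these are distinct because $\sigma(i)\neq j$. Conversely, no independent set of size $\le 2$ can dominate, since two independent cells occupy at most two rows and two columns, leaving a free cell with no neighbour in $D$. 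Hence $mi_2(R)=6$, and the product construction yields $\zeta_2\ge 6^{1/9}$, that is $\zeta_2^2\ge 6^{2/9}=\sqrt[9]{36}$.

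For the upper bound I want $\zeta_2\le 9^{1/10}=3^{1/5}$, i.e.\ $mi_2(n)\le c\cdot 3^{n/5}$. Two facts drive the argument: (i) every $2$-DIS is a maximal independent set (if $w\notin D$ could be added, it would have no neighbour in $D$, contradicting $2$-domination), so $mi_2(n)\le mi_1(n)=O(3^{n/3})$ for free; and (ii) the extra requirement that each outside vertex have at least \emph{two} neighbours in $D$ is what must be leveraged to push the exponent from $1/3$ down to $1/5$. I would run a branching recursion on a vertex $v$ of minimum degree $\delta$: vertices with $\delta\le 1$ lie in every $2$-DIS and can be removed, decreasing $n$ without increasing the count, so one may assume $\delta\ge 2$. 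When $\delta=2$ with $N(v)=\{a,b\}$, the branch $v\in D$ forces $a,b\notin D$, while the branch $v\notin D$ forces $a,b\in D$ (hence $a\not\sim b$); each branch pins down at least three vertices at once. For larger $\delta$ one branches over the admissible traces $D\cap N[v]$, each of which contains $v$ or at least two of its neighbours, and one removes the pinned vertices.

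The hard part is making the recursion close at the improved rate rather than at the trivial $3^{1/3}$ or the general bound $2^{2/3}$ of Theorem~\ref{zoligeneral}. The obstruction is that $2$-domination is not inherited by induced subgraphs: deleting the vertices pinned down in a branch may leave a surviving outside vertex with only one remaining neighbour in $D$, so the restriction of $D$ to the smaller graph need not be a $2$-DIS, and a naive ``$\le mi_2(n-j)$'' step is invalid. I would circumvent this by recursing not on plain graphs but on graphs carrying a residual demand function $r\colon V\to\{0,1,2\}$ recording how many further $D$-neighbours each undecided outside vertex still needs, and then verifying that in every degree case the number of pinned vertices, weighted against the number of surviving branches, satisfies the inequality forcing base $3^{1/5}$. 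Checking that the worst local configuration does no better than $9^{1/5}$ for $\zeta_2^2$ is the delicate, computational heart of the proof, and is precisely where the exponent $\tfrac15$ (rather than $\tfrac13$) is born.
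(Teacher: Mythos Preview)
This theorem is not proved in the present paper; it is quoted from Nagy~\cite{N2017} as background. The only part the paper itself revisits is the lower-bound gadget: in the proof of Theorem~\ref{lower} it records that $K_3\times K_3$ (with the paper's product convention) has six $2$-DISes on nine vertices, and Proposition~\ref{mkt}(i) then gives $\zeta_2^2\ge\sqrt[9]{36}$. Your rook's-graph construction is correct and is in fact the very same graph up to isomorphism --- the $3\times 3$ rook's graph is the unique strongly regular graph with parameters $(9,4,1,2)$ and is self-complementary, so it coincides with the paper's $K_3\times K_3$. On the lower bound, then, your argument is complete and matches the paper's.

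On the upper bound you give only a strategy, not a proof. You correctly isolate the key obstruction (that $2$-domination is not inherited by induced subgraphs, so one cannot simply bound a branch by $mi_2$ of a smaller graph), and your proposed fix via a residual-demand function $r:V\to\{0,1,2\}$ is a reasonable bookkeeping device. But you explicitly stop at ``checking that the worst local configuration does no better than $9^{1/5}$'', which is the entire content of the bound: without carrying out that case analysis nothing forces the base below $2^{2/3}$, let alone down to $3^{1/5}$. So this half of your proposal has a genuine gap. The paper offers no proof of $\zeta_2^2\le\sqrt[5]{9}$ to compare against; to fill the gap you would have to go to Nagy's original argument in~\cite{N2017}.
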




Nagy conjectured in \cite{N2017} (Conjecture 2, p19) that the lower bound of Theorem \ref{zoligeneral} will be the value of $\zeta_k^{k}$. Our following theorem disproves this conjecture.

\begin{thm}\label{lower}
For any even $k$ we have $$\sqrt[9]{36} \le \zeta^k_k.$$ Furthermore,  $\lim_\infty \zeta^k_k$ exists and is at least $\sqrt[9]{36}$.
\end{thm}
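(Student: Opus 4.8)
The plan is to construct, for each even $k$, an $n$-vertex graph with roughly $36^{n/9}$ many $k$-DISes, mimicking the structure that gives the bound $\sqrt[9]{36}\le\zeta_2^2$ in Theorem \ref{zoli2} but inflating it so that the domination requirement of $k$ neighbours (rather than $1$) is automatically met. The key idea is to replace each vertex of the $k=2$ extremal gadget by a blown-up ``cloud'' of $k/2$ almost-twin copies, so that whenever a vertex was dominated twice in the original construction, the corresponding blown-up vertex is now dominated $2\cdot(k/2)=k$ times. First I would recall (or reconstruct) the building block behind $\sqrt[9]{36}$: there is a fixed bipartite-like graph $H$ on $9$ vertices with exactly $36$ sets that are $2$-dominating independent, and the extremal family for $\zeta_2^2$ is a disjoint union of $n/9$ copies of $H$, giving $36^{n/9}$ many $2$-DISes.

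Next I would define the blow-up operation precisely. Given $H$, form $H^{(k/2)}$ by replacing each vertex $v$ of $H$ with an independent set $v_1,\dots,v_{k/2}$ and joining $v_i$ to $w_j$ whenever $vw\in E(H)$ (a lexicographic-type product with an empty graph). The crucial lemma to prove is that the $k$-DISes of $H^{(k/2)}$ are in bijection with the $2$-DISes of $H$: on one hand, a $2$-DIS $D$ of $H$ lifts to the set $\widehat D=\{v_i : v\in D,\ 1\le i\le k/2\}$, which is independent (since $D$ is), and every vertex $w_j$ outside $\widehat D$ has $|N(w_j)\cap \widehat D| = (k/2)\cdot|N_H(w)\cap D|\ge (k/2)\cdot 2 = k$, so $\widehat D$ is a $k$-DIS; on the other hand, any $k$-DIS $D'$ of $H^{(k/2)}$ must be a full ``cloud union'' (because the $v_i$ are twins: if some but not all $v_i$ lie in $D'$, the missing ones have all their $H^{(k/2)}$-neighbours also outside $D'$... this is where one has to be a bit careful and will be the main obstacle, see below), and its projection to $V(H)$ is then forced to be a $2$-DIS of $H$. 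Taking $t$ disjoint copies of $H^{(k/2)}$ on $n = \tfrac{9k}{2}t$ vertices then yields $36^t = 36^{2n/(9k)}$ many $k$-DISes, so $\zeta_k \ge 36^{2/(9k)}$, i.e.\ $\zeta_k^k \ge 36^{2/9}$. Since $36^{2/9}$ is in fact $\sqrt[9]{36}$ squared, this already beats $\sqrt{2}$; but to match the stated bound $\sqrt[9]{36}\le\zeta_k^k$ exactly (not its square) I expect we actually want each cloud to have size $k$ rather than $k/2$, with the base gadget $H$ being a $1$-DIS gadget (the $K_3$-type block): blowing up a triangle-union by factor $k$ makes every non-selected vertex see $k$ chosen neighbours, giving $3^{n/(3k)}$ and hence $\zeta_k^k\ge 3^{1/3}$ — so the precise choice of base gadget and blow-up factor should be calibrated to land on $\sqrt[9]{36}$, presumably using a $9$-vertex, $36$-count base graph blown up by factor $k$ (wait: then one needs the base graph's non-selected vertices to have $\ge 1$ chosen neighbour, and the count is $36^{n/(9k)}$, giving $\zeta_k^k \ge 36^{1/9}$ exactly). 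I would use that last version.

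The main obstacle, as flagged above, is the converse direction of the bijection: showing every $k$-DIS of the blow-up is a union of whole clouds. An arbitrary $k$-DIS need not respect the twin classes a priori; one must argue that if $D'$ contains $v_i$ but omits $v_j$ (same cloud), then since $v_i,v_j$ are non-adjacent twins, the omitted $v_j$ must be dominated by $D'$ through $N_H(v)$-clouds, and conversely selecting $v_i$ was ``wasteful'' — the cleanest route is to show that among all $k$-DISes the maximum number is attained only by cloud-unions, by a local exchange/compression argument (if a cloud is partially selected, either move to selecting it fully or not at all without decreasing the count of extendable configurations), or alternatively to prove directly that the total number of $k$-DISes of the blow-up equals the number of $2$-DISes (resp. $1$-DISes) of $H$ by a more careful enumeration. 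A secondary technical point is the existence of $\lim_\infty \zeta_k^k$: this should follow from Nagy's existence proof for $\zeta_k$ combined with monotonicity/supermultiplicativity in $k$ coming from the same blow-up construction (a $k$-DIS gadget blows up to a $k'$-DIS gadget for $k' \ge k$ a multiple of $k$), together with the uniform lower bound $\sqrt[9]{36}$ just established and the uniform upper bound from Theorem \ref{zoligeneral}; I would assemble these to get that the sequence $(\zeta_k^k)$ is bounded and eventually monotone along appropriate subsequences, hence convergent, with limit at least $\sqrt[9]{36}$.
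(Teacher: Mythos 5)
Your overall strategy --- blow up each vertex of the $9$-vertex $2$-DIS gadget into an independent ``cloud'' of size $k/2$ (a lexicographic product with an empty graph), take disjoint copies, and count --- is exactly the paper's construction (Proposition \ref{mkt} applied to $K_3\times K_3$). But the execution goes wrong on the numerology, and this derails your final choice of gadget. The $9$-vertex graph $K_3\times K_3$ has $6$ two-dominating independent sets, not $36$; the number $\sqrt[9]{36}$ arises as $6^{2/9}$. With the correct count, your \emph{first} construction gives $6^{t}$ many $k$-DISes on $n=\tfrac{9k}{2}t$ vertices, hence $\zeta_k\ge 6^{2/(9k)}$ and $\zeta_k^k\ge 6^{2/9}=36^{1/9}=\sqrt[9]{36}$ --- exactly the claimed bound, with no overshoot. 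Because you plugged in $36$ instead of $6$, you concluded that this construction yields the square of the desired bound, abandoned it, and settled instead on a gadget that cannot exist: a $9$-vertex graph with $36$ maximal independent sets is impossible, since Moon--Moser (Theorem \ref{mm}) gives $mi_1(9)=3^3=27$. So the version you say you ``would use'' is vacuous, while the version you discarded is the correct one.

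Separately, the ``main obstacle'' you flag --- proving that every $k$-DIS of the blow-up is a union of whole clouds, so as to get a bijection --- is not needed for this theorem. A lower bound on $\zeta_k$ only requires an injection from $2$-DISes of $H$ into $k$-DISes of $H^{(k/2)}$, and that is the easy direction you already verified: $\widehat D$ is independent, each outside vertex is dominated $(k/2)\cdot 2=k$ times, and distinct $D$ give distinct $\widehat D$. This is precisely what Proposition \ref{mkt}(ii) states and all that its proof establishes; no exchange or compression argument is required. (On the final clause of the theorem: neither your sketch nor the paper's written proof actually establishes that $\lim_{k\to\infty}\zeta_k^k$ exists --- boundedness follows from Theorem \ref{zoligeneral}, and the lower bound on any limit point from the even-$k$ construction, but convergence itself would need a separate monotonicity or subadditivity argument.)
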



In this paper, our aim is to show that there is a constant $\eta > 0$ such that $\zeta_k^{k} < 2-\eta$ for all $k \ge 3$, thus improving Theorem \ref{zoligeneral}.


\begin{thm}\label{upper}
For $k \ge 3$ we have $$\zeta_k^{k} \le 2.053^{\frac{1}{1.053+1/k}}< 1.98.$$
\end{thm}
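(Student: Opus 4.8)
The plan is to improve Nagy's upper bound $\zeta_k^k \le 2^{k/(k+1)}$ by extracting more structure from a graph $G$ on $n$ vertices that has many $k$-DISes. The starting point is the observation (implicit in Nagy's argument) that if two vertices $u,v$ are \emph{almost twins} — meaning $N(u)\setminus\{v\} = N(v)\setminus\{u\}$, so they play interchangeable roles — then in any $k$-DIS containing neither $u$ nor $v$, both have $\ge k$ neighbours in $D$, while a $k$-DIS can contain at most one of an adjacent almost-twin pair. More importantly, a vertex $v$ that lies in a $k$-DIS $D$ must have \emph{all} of its $\ge k$ forced neighbours (for vertices outside $D$) accounted for; the key combinatorial tension is that a single vertex outside $D$ "uses up" $k$ slots in $D$, which is what gives Nagy the factor $k/(k+1)$. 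First I would set up the entropy/counting framework: partition $V(G)$ and bound the number of $k$-DISes by a product of local contributions, tracking for each vertex whether it is in $D$, and if not, which $k$ of its neighbours certify domination.

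The heart of the improvement is a dichotomy argument on the structure of $G$. Either $G$ contains many vertices of "small" closed neighbourhood relative to a certain threshold — in which case one can bound the number of $k$-DISes by a branching recursion that beats $2^{k/(k+1)}$ per $k$ vertices, because low-degree vertices outside $D$ impose strong constraints — or else $G$ is "dense" in the sense that almost every vertex has degree above the threshold, in which case a $k$-DIS, being independent, misses most of the graph, and each excluded vertex being $k$-dominated forces a large overlap structure that again limits the count. I would quantify this by choosing a parameter $\alpha$ (this is where the $1.053$ and $2.053$ come from: $2.053 = 1 + 1.053$, and the exponent $1/(1.053 + 1/k)$ suggests optimizing $x^{1/(c+1/k)}$ over the trade-off), and showing that in the first regime we get a bound like $c_1^{n}$ and in the second a bound like $c_2^{n}$, with the worst case occurring at the balance point. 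The algebraic identity to verify will be that $2.053^{1/(1.053+1/k)}$ is indeed $<1.98$ for all $k\ge 3$, which is a monotonicity check: the exponent $1/(1.053+1/k)$ is increasing in $k$, so the expression is largest as $k\to\infty$, giving $2.053^{1/1.053} = 2.053^{0.9497\ldots}$, and one checks this is below $1.98$.

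Concretely, the steps in order would be: (1) reduce to considering a graph with no isolated vertices and record the "certificate" structure (for each $k$-DIS $D$ and each $v\notin D$, fix $k$ neighbours of $v$ in $D$); (2) set up a weight function on vertices and a supermultiplicativity argument so that $mi_k(n) \le f(n)$ for some $f$ satisfying $f(n) \le f(n-t)\cdot g(t)$ over small configurations; (3) prove the key structural lemma that any graph on $t$ vertices contributing many $k$-DISes must contain a bounded configuration (a small set of vertices whose removal decreases $n$ while the per-vertex $k$-DIS growth rate on that configuration is strictly below $2^{k/(k+1)}$) — this is where almost-twin vertices enter, since a large independent set with no such configuration is forced to have many almost-twins, which are wasteful; (4) optimize the parameters and conclude $\zeta_k^k \le 2.053^{1/(1.053+1/k)}$, then (5) do the elementary calculus to see this is $<1.98$ uniformly in $k\ge 3$.

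The main obstacle I expect is step (3): making precise the claim that a graph avoiding all "efficient" small configurations must be structurally degenerate (lots of almost-twins, or very sparse, or very dense in a controlled way), and then checking that in \emph{each} of these degenerate cases the exponential growth rate is bounded by the claimed quantity. The bookkeeping for the $k$-domination certificates — ensuring one does not overcount $k$-DISes when a vertex outside $D$ has more than $k$ neighbours in $D$, and handling the interaction between the certificate choices of different excluded vertices — is the delicate part, and is presumably where the non-obvious constant $1.053$ is forced by optimizing the recursion.
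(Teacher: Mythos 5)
Your sketch correctly identifies the two outer ingredients---a minimum-degree dichotomy and some role for ``almost twins''---but the argument that actually produces the constant $2.053^{1/(1.053+1/k)}$ is missing, and the step you yourself flag as the main obstacle (your step (3)) is precisely where the paper's one genuinely new idea lives. First, your definition of almost twins ($N(u)\setminus\{v\}=N(v)\setminus\{u\}$) is far too strong. The paper calls two non-adjacent vertices $x,y$ almost twins when $|N(x)\setminus N(y)|<k$ \emph{and} $|N(y)\setminus N(x)|<k$; the point is that if $x$ lies in a $k$-DIS $I$ and $y\notin I$, then $y$ needs $k$ neighbours inside $I$, all of which avoid $N(x)$ (since $x\in I$ forces $N(x)\cap I=\emptyset$), so $|N(y)\setminus N(x)|\ge k$ --- contradiction. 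Hence almost twins, and by transitivity whole connected components of the auxiliary graph $T_G$ on $N(v)$, belong to exactly the same $k$-DISes. Second, the counting mechanism is not an entropy or certificate argument: one fixes a vertex $v$ of minimum degree $\delta\le(1+\varepsilon)k$, notes that any $k$-DIS $I$ with $v\notin I$ contains at least $k$ vertices of $N(v)$ and hence at least $\binom{k}{2}$ \emph{pairs} from $N(v)$, and then bounds, for each pair $x,y\in N(v)\cap I$, the number of such $I$: if $x,y$ lie in different components of $T_G$ then $I$ avoids $N(x)\cup N(y)$, of size $\ge\delta+k$; if they lie in the same component $C$ then $I\supseteq C$ and $I$ avoids $N(C)$, of size $\ge\delta$. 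Dividing by $\binom{k}{2}$ yields the recursion
$$mi_k(n)\le mi_k(n-\delta-1)+\frac{\sum_i\binom{s_i}{2}mi_k(n-\delta-s_i)+\bigl(\binom{\delta}{2}-\sum_i\binom{s_i}{2}\bigr)mi_k(n-\delta-k)}{\binom{k}{2}},$$
where the $s_i$ are the component sizes of $T_G$, and the constants $\varepsilon=0.053$, $\beta=0.8$ come from optimizing this recursion (split again according to whether all components have size at most $\beta k$). The complementary regime $\delta\ge(1+\varepsilon)k$ is not handled by a new density argument but simply by quoting Nagy's bound $mi_k(n)\le c_0\max_\delta((k+\delta)/k)^{n/(\delta+1)}$ and checking monotonicity in $\delta$.

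In short: your plan would still require inventing the pair-counting over $N(v)$ and the correct (weak) notion of almost twins before any of the quantitative claims could be established, so as written it does not constitute a proof. The final numerical observation you make---that $2.053^{1/(1.053+1/k)}$ is increasing in $k$ and tends to $2.053^{1/1.053}<1.98$---is correct and matches the paper's concluding remark, but it is the only part of the argument that is actually carried out.
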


\begin{rem}
It is easy to see that $1.98 < 2^{k/(k+1)}$ for $k \ge 588503$. In fact, the following calculation shows that Theorem \ref{upper} improves Theorem \ref{zoligeneral} for all $k \ge 3$. We want to show that $$2^{k/(k+1)} > (2+ \varepsilon)^{1/(1+\varepsilon +1/k)},$$ for $\varepsilon=0.053$ and any $k \ge 3$. After rearranging we get $$2^\varepsilon > (1+\varepsilon/2)^{1+1/k},$$
which is true for $\varepsilon=0.053$ and $k=3$. Therefore, it is true for any larger $k$.
\end{rem}


\vspace{4mm}

The remainder of the paper is organized as follows. In Section 2 we prove Theorem \ref{lower}, in Section 3 we prove Theorem \ref{upper} and we finish the article with some remarks and open questions in Section 4.

\section{Constructions - Proof of Theorem \ref{lower}}

In this section we gather some observations that are related to lower bound constructions. To be more formal, we introduce the following function: let $m(k,t)$ denote the smallest integer $n$ such that there exists a graph on $n$ vertices that contains at least $t$ $k$-DISes. For our constructions we will need two types of graph products: the \textit{lexicographic product} $G\cdot H$ of two graphs $G$ and $H$ has vertex set $V(G)\times V(H)$ and any two vertices $(u,v)$ and $(x,y)$ are adjacent in $G \cdot H$ if and only if either $u$ is adjacent with $x$ in $G$ or $u=x$ and $v$ is adjacent with $y$ in $H$.

The \textit{cartesian product} $G\times H$ of two graphs $G$ and $H$ also has vertex set $V(G)\times V(H)$ and any two vertices $(u,v)$ and $(x,y)$ are adjacent in $G \cdot H$ if and only if both $u$ is adjacent with $x$ in $G$ and $v$ is adjacent with $y$ in $H$.

All our lower bounds follow from the following remark.

\begin{proposition}\label{mkt}
For any positive integers $k,l,t$, we have

\vspace{2mm}

\textbf{(i)} $mi_k(n)\ge t^{\lfloor \frac{n}{m(k,t)}\rfloor}$, and

\vspace{1mm}

\textbf{(ii)} $m(kl,t)\le lm(k,t)$.
\end{proposition}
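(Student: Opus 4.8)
For part (i): The plan is a standard disjoint-union argument. Let $G$ be a graph on $m(k,t)$ vertices with at least $t$ many $k$-DISes, which exists by definition of $m(k,t)$. Form the disjoint union $H$ of $\lfloor n/m(k,t)\rfloor$ copies of $G$ (padding with isolated vertices to reach exactly $n$ vertices if necessary — isolated vertices must lie in every $k$-dominating independent set, so they do not affect the count). First I would observe that in a disjoint union, a set $D$ is a $k$-DIS if and only if its restriction to each component is a $k$-DIS of that component: independence is componentwise, and the condition $|N(v)\cap D|\ge k$ for $v\notin D$ only ever involves vertices in $v$'s own component. Hence the $k$-DISes of $H$ are exactly the tuples of $k$-DISes chosen independently in each copy of $G$, giving at least $t^{\lfloor n/m(k,t)\rfloor}$ of them, and therefore $mi_k(n)\ge t^{\lfloor n/m(k,t)\rfloor}$.

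For part (ii): Here the idea is to convert a graph witnessing $m(k,t)$ into one witnessing $m(kl,t)$ by multiplying every vertex's "domination capacity" by $l$. Let $G$ be a graph on $m(k,t)$ vertices with at least $t$ many $k$-DISes. I would take $H := G \cdot \overline{K_l}$, the lexicographic product of $G$ with the empty graph on $l$ vertices (equivalently, replace each vertex of $G$ by an independent set of $l$ "clones", with clones of adjacent vertices completely joined). This graph has $l\cdot m(k,t)$ vertices, so it suffices to show $H$ has at least $t$ many $(kl)$-DISes. The natural map sends a $k$-DIS $D$ of $G$ to the "blow-up" $\widehat{D}$ consisting of all $l$ clones of each vertex of $D$. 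One checks that $\widehat{D}$ is independent (clones of a single vertex form an independent set, and no edge joins clones of two vertices of $D$ since $D$ is independent in $G$), and that for any clone $v'$ of a vertex $v\notin D$ we have $|N(v')\cap\widehat{D}| = l\cdot |N_G(v)\cap D| \ge lk$, so $\widehat{D}$ is a $(kl)$-DIS. Since distinct $k$-DISes of $G$ yield distinct blow-ups, $H$ has at least $t$ many $(kl)$-DISes, giving $m(kl,t)\le l\cdot m(k,t)$.

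I do not anticipate a serious obstacle; both parts are bookkeeping with the definitions. The one point to be careful about in (ii) is that we only claim a lower bound of $t$ on the number of $(kl)$-DISes via the injection from $k$-DISes of $G$ — we are not claiming the blow-up map is a bijection onto all $(kl)$-DISes of $H$ (it need not be), only that it is injective, which is all that is needed for the inequality $m(kl,t)\le l\,m(k,t)$. Part (i) should be stated with the remark about isolated vertices so the index $\lfloor n/m(k,t)\rfloor$ is literally achieved for every $n$.
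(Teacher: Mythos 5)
Your proposal is correct and follows essentially the same route as the paper: part (i) via disjoint copies of a witness graph padded with isolated vertices (which must lie in every $k$-DIS), and part (ii) via the lexicographic product $G\cdot E_l$, i.e.\ blowing up each vertex into $l$ independent clones so that each $k$-DIS of $G$ lifts injectively to a $(kl)$-DIS. Your write-up just spells out the componentwise and clone-counting verifications in slightly more detail than the paper does.
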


\begin{proof}
To prove \textbf{(i)} observe that if $G$ is a graph on $m(k,t)$ vertices containing at least $t$ $k$-DISes, then the graph $G'$ consisting of $\lfloor \frac{n}{m(k,t)}\rfloor$ disjoint copies of $G$ and possibly some isolated vertices, contains at least $t^{\lfloor \frac{n}{m(k,t)}\rfloor}$ many $k$-DISes. Indeed, all isolated vertices must be contained in every $k$-DIS of $G'$, and to form a $k$-DIS of $G'$, one has to pick a $k$-DIS in every copy of $G$.

To prove \textbf{(ii)} let $G$ be a graph on $m(k,t)$ vertices containing at least $t$ $k$-DISes. Then, if we denote by $E_l$ the empty graph on $l$ vertices, the graph $G'=G\cdot E_l$ has $lm(k,t)$ vertices and if $I$ is a $k$-DIS in $G$, then $I'=\{(u,v):u\in I\}$ is a $(kl)$-DIS in $G'$.
\end{proof}

\begin{proof}[Proof of Theorem \ref{lower}] First note (as observed by Nagy already) that $K_3\times K_3$ contains 6 $2$-DISes on 9 vertices. Therefore, by \textbf{(ii)} of Proposition \ref{mkt}, for every even $k$ we have $$m(k,6)\le \frac{k}{2}m(2,6)\le \frac{9k}{2}.$$ Part \textbf{(i)} of Proposition \ref{mkt} yields the statement for even $k$.

\end{proof}

\begin{proposition} $m(k,2)=2k$, $m(k,3)=3k$.

\end{proposition}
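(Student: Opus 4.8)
The plan is to prove matching upper and lower bounds. For the upper bounds $m(k,2)\le 2k$ and $m(k,3)\le 3k$ I would exhibit explicit constructions: the complete bipartite graph $K_{k,k}$ has its two sides as $k$-DISes, and the complete tripartite graph $K_{k,k,k}$ has its three colour classes as $k$-DISes. (These graphs are exactly $K_2\cdot E_k$ and $K_3\cdot E_k$, so the upper bounds also follow immediately from part \textbf{(ii)} of Proposition \ref{mkt} together with the trivial facts $m(1,2)\le 2$ and $m(1,3)\le 3$, witnessed by $K_2$ and $K_3$.)

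For the lower bounds the key tool is a separation lemma: \emph{if $D$ and $D'$ are two distinct $k$-DISes of a graph $G$, then $|D\setminus D'|\ge k$ and $|D'\setminus D|\ge k$.} To prove it I would first observe that neither of $D,D'$ contains the other: if $D\subsetneq D'$ and $v\in D'\setminus D$, then $v\notin D$ forces $v$ to have at least $k\ge 1$ neighbours in $D\subseteq D'$, contradicting independence of $D'$. Hence $D\setminus D'$ and $D'\setminus D$ are both nonempty; picking $u\in D\setminus D'$, the vertex $u$ lies outside $D'$, so it has at least $k$ neighbours in $D'$, and since $D$ is independent none of these neighbours lies in $D$, hence they all lie in $D'\setminus D$; thus $|D'\setminus D|\ge k$, and symmetrically $|D\setminus D'|\ge k$.

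Given the lemma, $m(k,2)\ge 2k$ is immediate: if $G$ has two distinct $k$-DISes $D_1,D_2$ then, since $D_1\setminus D_2$ and $D_2\setminus D_1$ are disjoint, $|V(G)|\ge |D_1\setminus D_2|+|D_2\setminus D_1|\ge 2k$. For $m(k,3)\ge 3k$, suppose $G$ has three distinct $k$-DISes $D_1,D_2,D_3$. Summing the lemma over the six ordered pairs gives $\sum_{i\ne j}|D_i\setminus D_j|\ge 6k$. On the other hand, a vertex $v$ contributes to $|D_i\setminus D_j|$ exactly when $v\in D_i$ but $v\notin D_j$; distinguishing the cases according to how many of the three sets contain $v$ (a vertex in $0$ or $3$ of them contributes $0$, a vertex in exactly $1$ or exactly $2$ of them contributes exactly $2$) shows each vertex contributes at most $2$, whence $\sum_{i\ne j}|D_i\setminus D_j|\le 2\,|D_1\cup D_2\cup D_3|$. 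Therefore $|V(G)|\ge |D_1\cup D_2\cup D_3|\ge 3k$.

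The only mildly delicate step is the double-counting used for $m(k,3)$, i.e. the small inclusion–exclusion computation on three sets showing that the six quantities $|D_i\setminus D_j|$ sum to at most twice $|D_1\cup D_2\cup D_3|$; everything else is routine. Combining the two directions gives $m(k,2)=2k$ and $m(k,3)=3k$, and both bounds are tight, matched by $K_{k,k}$ and $K_{k,k,k}$ respectively.
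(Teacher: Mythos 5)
Your proof is correct and follows essentially the same route as the paper: the same witnesses $K_{k,k}$ and $K_{k,k,k}$ for the upper bounds, and the same key separation lemma ($|A\setminus B|,|B\setminus A|\ge k$ for distinct $k$-DISes, proved by the same neighbour-counting argument) for the lower bounds. Your explicit double count over the six ordered pairs is just a clean way of carrying out what the paper dismisses as ``analyzing possible intersection sizes,'' so there is nothing genuinely different here.
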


\begin{proof} The upper bounds are given by $K_{k,k}$ and $K_{k,k,k}$. For the lower bounds, note that if $A$ and $B$ are two different $k$-DISes, then we have $|A\setminus B|\ge k$ and $|B\setminus A|\ge k$. Indeed, e.g., if $v\in A\setminus B$ then $N(v)$ must contain at least $k$ vertices in $B$, while none of these are in $A$. This observation immediately shows we need at least $2k$ vertices for 2 $k$-DISes. One can easily see by analyzing possible intersection sizes that it also shows we need at least $3k$ vertices for 3 $k$-DISes.
\end{proof}

Note that $K_{k,k,\dots,k}$ gives $m(k,t)\le tk$. Nagy \cite{N2017} showed $m(2,4)=8$ and $m(2,6)= 9$.

\section{Proof of Theorem \ref{upper}}

First of all we fix $k\ge 3$. Let $\varepsilon=0.053$ and choose $c$ such that $$c^{k}=(2 + \varepsilon)^{\frac{1}{1+\varepsilon + 1/k}}.$$

We need to show that $mi_k(n)\le Ac^n$ for some absolute constant $A$. We will proceed by induction on $n$ and the base case is covered by a large enough choice of $A$. Let $G$ be a graph on $n$ vertices containing maximum possible number of $k$-DISes. We assume that every vertex belongs to at least one $k$-DIS, as otherwise we can delete the vertex without decreasing the number of $k$-DISes. Let $v$ be a vertex of minimum degree in $G$ that we denote by $\delta$. Note that we may assume $\delta \ge k$. Indeed, if a vertex $v$ has degree less than $k$, then it is easy to see that it must be contained in every $k$-DIS of $G$. Then it follows that the number of $k$-DISes in $G$ is at most $mi_k(n-\abs{N(v)}-1)$ (where $N(v)$ denotes the set of vertices adjacent to $v$) and we are done by induction.

Consider the following two cases:

\vspace{4mm}

\textbf{Case 1}: $\delta \ge (1 + \varepsilon)k$. 

\vspace{2mm}

In this case we use Proposition 5.1 from \cite{N2017}. Following an inductive argument of F\"uredi \cite{F1987}, Nagy proved that we have 
$$mi_k(n) = mi_k(G)\le c_0\max_{\delta \in \mathbb{Z}^{+}} \{ \left(\frac{k+\delta}{k} \right)^{\frac{n}{\delta+1}} \} .$$ 
for some universal constant $c_0$.  Let $\delta = (1+ \varepsilon')k$. Then we have 

$$mi_k(n) \le c_0(2 + \varepsilon')^{\frac{1}{1+\varepsilon' + 1/k}{\frac{n}{k}}}.$$ 
By Proposition \ref{derivative} (see Appendix), the right hand side of the above inequality is monotone decreasing in $\varepsilon'$.
 Since $\delta \ge (1+\varepsilon)k$, we have $\varepsilon' \ge \varepsilon$. So for fixed $k \ge 3$ we conclude that 
$$mi_k(n) \le c_0(2 + \varepsilon)^{\frac{1}{1+\varepsilon + 1/k}{\frac{n}{k}}} =O(c^{n}).$$

\vspace{4mm}

\textbf{Case 2}: $\delta \le (1 + \varepsilon)k.$

\vspace{2mm}

In this case we combine the inductive argument with a new idea. Let $v$ be a vertex of degree $\delta$. The number of $k$-DISes containing $v$ is at most $mi_k(n-\delta-1)$ and to bound the number of $k$-DISes not containing $v$, we introduce the following auxiliary graph. We say that two non-adjacent vertices $x,y$ of $G$ are  \textit{almost twins} if $$|N(x)\setminus N(y)|, \ |N(y)\setminus N(x)|<k$$ hold. We define $T_G$ to be the graph with vertex set $N(v)$ and $x,y$ form an edge in $T_G$ if they are almost twins in $G$.

\begin{prop}\label{almost}
If $x,y$ belong to the same connected component in $T_G$, then they belong to the same $k$-DISes of $G$. In particular, they are not connected.
\end{prop}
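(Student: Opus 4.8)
The plan is to establish the single-edge case first — that two almost twins lie in exactly the same $k$-DISes — and then propagate this equivalence along paths in $T_G$.

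First I would fix an arbitrary $k$-DIS $D$ of $G$ and two almost twins $x,y\in N(v)$, and argue by contradiction that it is impossible to have $x\in D$ while $y\notin D$. Since $y\notin D$ and $D$ is $k$-dominating, $y$ has at least $k$ neighbours in $D$, i.e.\ $|N(y)\cap D|\ge k$. Now split $N(y)\cap D$ according to membership in $N(x)$: the part lying outside $N(x)$ is contained in $N(y)\setminus N(x)$, which has size at most $k-1$ because $x,y$ are almost twins. Hence $N(x)\cap N(y)\cap D\neq\emptyset$, so there is a vertex $z$ adjacent to both $x$ and $y$ with $z\in D$; note $z\neq x$ since $z\in N(x)$. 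But then $x\in D$, $z\in D$ and $xz\in E(G)$, contradicting the independence of $D$. The symmetric possibility $y\in D$, $x\notin D$ is excluded in exactly the same way, using $|N(x)\setminus N(y)|<k$ in place of $|N(y)\setminus N(x)|<k$. Therefore $x\in D\iff y\in D$ for every $k$-DIS $D$; that is, almost twins belong to precisely the same $k$-DISes of $G$.

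For the statement as stated, take $x$ and $y$ in the same connected component of $T_G$ and a path $x=x_0,x_1,\dots,x_m=y$ in $T_G$. Each consecutive pair $x_i,x_{i+1}$ is an almost-twin pair, so by the previous paragraph $x_i$ and $x_{i+1}$ lie in the same $k$-DISes; chaining these $m$ equivalences shows that $x$ and $y$ lie in the same $k$-DISes of $G$, which is the first assertion. For the ``in particular'' clause, recall that at the start of the proof of Theorem~\ref{upper} we arranged that every vertex of $G$ belongs to at least one $k$-DIS; thus $x$ lies in some $k$-DIS $D$, and then $y\in D$ as well, and since $D$ is an independent set $x$ and $y$ are non-adjacent in $G$.

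I do not anticipate a real obstacle: the only step needing a moment of care is the counting argument, namely checking that $|N(y)\cap D|\ge k$ together with $|N(y)\setminus N(x)|\le k-1$ genuinely forces a common neighbour of $x$ and $y$ inside $D$, and recording that almost twins are by definition non-adjacent, so that the conflicting vertex $z$ is distinct from $x$ and the edge $xz$ really contradicts independence.
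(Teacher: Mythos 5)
Your proof is correct and follows essentially the same route as the paper's: reduce to a single edge of $T_G$, then use $k$-domination of the vertex outside the DIS together with $|N(y)\setminus N(x)|<k$ and independence to get a contradiction (the paper phrases it as $N(x)\cap I=\emptyset$ forcing $|N(y)\setminus N(x)|\ge k$, you phrase it as exhibiting a common neighbour in $D$, which is the same counting argument). Your explicit justification of the ``in particular'' clause via the standing assumption that every vertex lies in some $k$-DIS is a detail the paper leaves implicit, and it is handled correctly.
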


\begin{proof}
It is enough to prove the statement for vertices adjacent in $T_G$. If $x$ belongs to a $k$-DIS $I$ with $y\notin I$, then there should be at least $k$ neighbors of $y$ in $I$ and as $x\in I$, we must have $N(x)\cap I=\emptyset$. This implies $|N(y)\setminus N(x)|\ge k$ which contradicts the fact that $x$ and $y$ are almost twins.
\end{proof}

If a pair of vertices  $x,y\in N(v)$ belong to different components of $T_G$ then the $k$-DISes $I$ containing both of $x$ and $y$ are disjoint from $N(x)\cup N(y)$, and $I\setminus \{x,y\}$ should form a $k$-DIS in $G\setminus (N(x)\cup N(y)\cup \{x,y\})$. As $x$ and $y$ are not almost twins, $|N(x)\cup N(y)|\ge \delta+k$ as wlog. $|N(y)\setminus N(x)|\ge k$ and $|N(x)|\ge k$. Thus, the number of $k$-DISes containing both of $x$ and $y$ is at most $mi_k(n-\delta-k)$. 

On the other hand, if $x$ and $y$ are in the same component $C$ of $T_G$, then by Proposition \ref{almost} any $k$-DIS $I$ containing both of $x$ and $y$ contains all vertices of $C$, is disjoint from $N(C)$ and $I\setminus C$ is a $k$-DIS in $G\setminus (N(C)\cup C)$ and by the second part of Proposition \ref{almost} $N(C)$ and $C$ are disjoint. As $|N(C)|\ge \delta$, the number of $k$-DISes containing both of $x$ and $y$ is at most $mi_k(n-\delta-|C|)$. 

Writing $s_1,s_2,\dots,s_j$ for the sizes of the components of $T_G$, we obtain

\begin{equation}\label{bnd}
mi_k(n)\le mi_k(n-\delta-1)+\frac{\sum_{i=1}^j \binom{s_i}{2}mi_k(n-\delta-s_i)+(\binom{\delta}{2}-\sum_{i=1}^j\binom{s_i}{2})mi_k(n-\delta-k)}{\binom{k}{2}}
\end{equation}
as every $k$-DIS $I$ with $v\notin I$ was counted at least $\binom{k}{2}$ times since $I$ must $k$-dominate $v$.

Let us choose $B=\beta k$ with $\beta=0.8$. This implies $2\le B\le k$ as $k\ge 3$. Suppose that in $T_G$  the union of components of size at most $B$ is $s$. Then the number of pairs of vertices within these components is $\sum_{s_i\le B}\binom{s_i}{2}\le \frac{s(B-1)}{2}$. Also, the number of pairs within components of size larger than $B$ is $\sum_{s_i> B}\binom{s_i}{2}\le \binom{\delta-s}{2}$. Observe that either $s=\delta$ or $s<\delta-B$.


Observe that $mi_k(n-\delta-2)\ge mi_k(n-\delta-B)\ge mi_k(n-\delta-k)$. Thus majoring all $\binom{\delta}{2}$ summands in the following sum we get:	
$$\sum_{i=1}^j\binom{s_i}{2}mi_k(n-\delta-s_i)+(\binom{\delta}{2}-\sum_{i=1}^j\binom{s_i}{2})mi_k(n-\delta-k)\le $$ $$\le \sum_{s_i\le B}\binom{s_i}{2}mi_k(n-\delta-2)+\sum_{s_i> B}\binom{s_i}{2}mi_k(n-\delta-B)+(\binom{\delta}{2}-\sum_{i=1}^j\binom{s_i}{2})mi_k(n-\delta-k)\le $$
$$
\le \frac{s(B-1)}{2} mi_k(n-\delta-2)+\binom{\delta - s}{2}mi_k(n-\delta-B)+\left(\binom{\delta}{2}- \frac{s(B-1)}{2}-\binom{\delta - s}{2}\right) mi_k(n-\delta-k)
$$
As $\binom{\delta}{2}=\frac{s(B-1)}{2}+[s(\delta-s)+\frac{s(s-B)}{2}]+\binom{\delta-s}{2}$, this implies that the right hand side of (\ref{bnd}) is at most  

\begin{equation}\label{bnd2}
mi_k(n-\delta-1)+\frac{s(B-1)}{2\binom{k}{2}}mi_k(n-\delta-2)+\frac{(s(\delta-s)+\frac{s(s-B)}{2})}{\binom{k}{2}}mi_k(n-\delta-k)+\frac{\binom{\delta-s}{2}}{\binom{k}{2}}mi_k(n-\delta-B).
\end{equation}

\noindent
Recall that we want to prove that $mi_k(n)\le Ac^{n}$ for some constant $A$. Using (\ref{bnd2}), by induction after simplifying it would be enough to show  

$$
E:=c^{n} -\left[ c^{n-\delta-1} + \frac{s(B-1)}{2\binom{k}{2}}c^{n-\delta-2} + \frac{(s(\delta-s)+\frac{s(s-B)}{2})}{\binom{k}{2}}c^{n-\delta-k} + \frac{\binom{\delta-s}{2}}{\binom{k}{2}}c^{n-\delta-B}\right]\ge 0.
$$

\noindent



\noindent


\noindent
Using that $k \le \delta$ and simplifying we obtain

\begin{equation}\label{eqbb}
\frac{E}{c^{n-\delta-k}} \ge c^{2k} - \left[c^{k-1}+ \frac{s(B-1)}{k(k-1)}c^{k-2} + \frac{s(2\delta-s-B)}{k(k-1)} + \frac{(\delta-s)(\delta-s-1)}{k(k-1)}c^{k-B}\right].
\end{equation}


\vspace{5mm}

We consider two cases, depending on whether $s$ is equal to $\delta$ or not. In the latter case, $s< \delta-B$, as noted already.

\vspace{2mm}

\textbf{Case 2.1:} $s=\delta$

In this case, the right hand side of (\ref{eqbb}) simplifies to
$$c^{2k} -c^{k-1}- \frac{\delta(B-1)}{k(k-1)}c^{k-2} - \frac{\delta(\delta-B)}{k(k-1)}
.$$

\vspace{2mm}

\noindent

Since $\delta$ is at most $(1+\varepsilon)k$ and replacing $B$ by $\beta k$, the right hand side of the above inequality is at least
\vspace{2mm}


$$c^{2k} - c^{k-1} - (1 +\varepsilon)\frac{(\beta k-1)}{(k-1)}c^{k-2}- (1 + \varepsilon)(1+\varepsilon - \beta)\left(\frac{k}{k-1}\right)
=: f_0(k, \varepsilon, \beta)$$ 
$$\ge c^{2k} - c^k - (1 +\varepsilon)\beta c^k- (1 + \varepsilon)(1+\varepsilon - \beta)(1+\frac{1}{1000}) =: f_1(k, \varepsilon, \beta)$$ for $k>1000$. 


Recall that $\varepsilon=0.053$ and $\beta=0.8$. Note that the function $a^2 - a - (1 +\varepsilon)\beta a- (1 + \varepsilon)(1+\varepsilon - \beta)(1+\frac{1}{1000})$ is increasing in the range $a \ge 1$. At $a = (2 + \varepsilon)^{\frac{1}{1+\varepsilon+1/1000 }}$ the function is positive, thus also for all $k>1000$ at $a = (2 + \varepsilon)^{\frac{1}{1+\varepsilon + 1/k}} = c^k$ the function is positive. This means $f_1(k, \varepsilon, \beta)>0$, which implies $ f_0(k, \varepsilon, \beta) > 0$ for $ k > 1000$. It is easy to check by a simple computer calculation that $ f_0(k, \varepsilon, \beta) > 0$ for $k \le 1000$ as well.


\vspace{5mm}

\textbf{Case 2.2:} $s< \delta-B$.

Note that $\max_{s<\delta -B}\{s(2\delta -s-B)\}< (\delta-B)\delta$. Using this, the right hand side of (\ref{eqbb}) is at least

$$c^{2k} - c^{k-1}- \frac{(\delta-B)(B-1)}{k(k-1)}c^{k-2} - \frac{(\delta-B)\delta}{k(k-1)} - \frac{\delta(\delta-1)}{k(k-1)}c^{k-B}\ge$$
$$c^{2k} - c^k - (1 +\varepsilon-\beta)\frac{(\beta k-1)}{(k-1)}c^k- (1 +\varepsilon-\beta )(1 + \varepsilon )\frac{k}{k-1}$$  $$-\frac{(1+\varepsilon)(k(1+\varepsilon)-1)}{(k-1)}c^{k-\beta k}:=f_2(k,\varepsilon,\beta)$$
$$ \ge c^{2k} - c^k- (1 +\varepsilon-\beta)\beta c^k- (1 +\varepsilon-\beta)(1 + \varepsilon+2/1000)-(1+\varepsilon)(1+\varepsilon(1+1/1000))c^{(1-\beta )k}:=f_3(k,\varepsilon,\beta)$$
for $k>1000$. In the last inequality for bounding the third term we used that $ 2/1000\ge (1+\varepsilon)/(k-1)$ for $k>1000$ as $\varepsilon= 0.053$.

Recall that $\beta=0.8$ and so $1-\beta=\frac{1}{5}$.
Observe that the function $a^{10}-a^5-(1 +\varepsilon-\beta)\beta a^5- (1 +\varepsilon-\beta)(1 + \varepsilon+2/1000)-(1+\varepsilon)(1+\varepsilon(1+1/1000))a$ is increasing in $a$ if $a>1$. As for $a = (2 + \varepsilon)^{\frac{0.2}{1+\varepsilon+1/1000000 }}$ the function is positive, also for all $k>1000000$ for the value $a = (2 + \varepsilon)^{\frac{0.2}{1+\varepsilon + 1/k}} = c^{k-\beta k}$ the function is positive. This means $f_3(k, \varepsilon, \beta)>0$, which implies $ f_2(k, \varepsilon, \beta) > 0$ for $ k > 1000000$. It is easy to check by a simple computer calculation that $ f_2(k, \varepsilon, \beta) > 0$ for $k \le 1000000$.





\vspace{5mm}


Since $\varepsilon=0.053$ and $c^k = (2 + \varepsilon)^{\frac{1}{1+\varepsilon + 1/k}} \le (2 + \varepsilon)^{\frac{1}{1+\varepsilon}}$ for any $k \ge 3$, we get $c^{k} \le 1.98$ for any $k \ge 3$, completing the proof of Theorem \ref{upper}.





\qed

\vspace{3mm}











\section*{Acknowledgement}

We are grateful to the MTA Resort Center of Balatonalm\'adi for their hospitality, where this research was initiated during the Workshop on Graph and Hypergraph Domination in June 2017. We would also like to thank D\'aniel Solt\'esz for helping us in the optimization process, and for introducing us to Wolframalpha cloud \cite{W}.

\vspace{2mm}

\noindent
Research of Gerbner and Patk\'os was supported by the J\'anos Bolyai Research Fellowship of the Hungarian Academy of Sciences.

\vspace{2mm}

\noindent
Research of Gerbner, Keszegh, Methuku and Patk\'os was supported by the National Research, Development and Innovation Office -- NKFIH, grant K 116769.

\vspace{2mm}

\noindent
Research of Patk\'os and Vizer was supported by the National Research, Development and Innovation Office -- NKFIH, grant SNN 116095.

\section*{Appendix}

\begin{prop}\label{derivative} Suppose $k \ge 3$ is fixed. Then the function 

$$f(\varepsilon)=(2 + \varepsilon)^{\frac{1}{1+\varepsilon + 1/k}}$$ is monotone decreasing in $\varepsilon$ for $\varepsilon \in  [0, \infty ) $.

\end{prop}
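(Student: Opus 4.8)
The plan is to show $f(\varepsilon) = (2+\varepsilon)^{1/(1+\varepsilon+1/k)}$ is decreasing by taking logarithms and differentiating. Write $g(\varepsilon) = \ln f(\varepsilon) = \frac{\ln(2+\varepsilon)}{1+\varepsilon+1/k}$. Since $f > 0$, it suffices to show $g'(\varepsilon) \le 0$ for all $\varepsilon \ge 0$. By the quotient rule,
\[
g'(\varepsilon) = \frac{\frac{1+\varepsilon+1/k}{2+\varepsilon} - \ln(2+\varepsilon)}{(1+\varepsilon+1/k)^2},
\]
so the sign of $g'(\varepsilon)$ is the sign of the numerator
\[
h(\varepsilon) := \frac{1+\varepsilon+1/k}{2+\varepsilon} - \ln(2+\varepsilon).
\]
Thus the whole problem reduces to proving $h(\varepsilon) \le 0$ for $\varepsilon \ge 0$ when $k \ge 3$.

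Next I would analyze $h$. Write the first term as $\frac{1+\varepsilon+1/k}{2+\varepsilon} = 1 - \frac{1 - 1/k}{2+\varepsilon}$. Since $k \ge 3$, we have $1 - 1/k \ge 2/3 > 0$, so this term is at most $1 - \frac{2/3}{2+\varepsilon}$, and in particular at most $1$ and increasing in $k$ — so the worst case is the largest value of $1+\varepsilon+1/k$, but since we want an upper bound on $h$ and the first term is increasing in $1/k$, the hardest case is actually $k$ small; however $1/k \le 1/3$, so uniformly $\frac{1+\varepsilon+1/k}{2+\varepsilon} \le \frac{4/3+\varepsilon}{2+\varepsilon}$. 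Hence it suffices to prove $\tilde h(\varepsilon) := \frac{4/3+\varepsilon}{2+\varepsilon} - \ln(2+\varepsilon) \le 0$ for $\varepsilon \ge 0$. At $\varepsilon = 0$ this is $\frac{2}{3} - \ln 2 \approx 0.667 - 0.693 < 0$. For the rest: $\frac{4/3+\varepsilon}{2+\varepsilon} = 1 - \frac{2/3}{2+\varepsilon} < 1$, while $\ln(2+\varepsilon) \ge \ln 2 > 0.69$ throughout and grows without bound, so for large $\varepsilon$ the inequality is clear; one checks monotonicity or simply bounds $\frac{4/3+\varepsilon}{2+\varepsilon} \le 1 < \ln(e) \le \ln(2+\varepsilon)$ once $\varepsilon \ge e - 2 \approx 0.718$, and on the compact interval $[0, e-2]$ a direct estimate (e.g. the first term is at most $\frac{4/3 + (e-2)}{2} = \frac{e - 2/3}{2} \approx 1.026$ — slightly above $1$, so instead use $\ln(2+\varepsilon) \ge \ln 2 + \frac{\varepsilon}{2+\varepsilon}$ from concavity, giving $\tilde h(\varepsilon) \le 1 - \frac{2/3}{2+\varepsilon} - \ln 2 - \frac{\varepsilon}{2+\varepsilon} = (1 - \ln 2) - \frac{2/3 + \varepsilon}{2+\varepsilon}$, and since $\frac{2/3+\varepsilon}{2+\varepsilon} \ge \frac{1}{3}$ for $\varepsilon \ge 0$, this is at most $1 - \ln 2 - 1/3 = 2/3 - \ln 2 < 0$). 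That closes the argument.

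The main obstacle is purely the elementary-calculus bookkeeping: one has to be a little careful near $\varepsilon = 0$, since $\frac{2}{3} - \ln 2$ is negative but not by a large margin, and one must ensure the bound $\ln(2+\varepsilon) \ge \ln 2 + \frac{\varepsilon}{2+\varepsilon}$ (the tangent-line/concavity inequality $\ln x \ge \ln x_0 + (x-x_0)/x$ applied at $x_0 = 2$, which holds because $\ln$ is concave and $1/x$ is its derivative bound from below on $[x_0,\infty)$ — more precisely $\ln x - \ln 2 = \int_2^x dt/t \ge \int_2^x dt/x = (x-2)/x$) is valid for all $\varepsilon \ge 0$. Everything else is routine, and the constraint $k \ge 3$ enters only through $1/k \le 1/3$, which is exactly what makes the uniform estimate go through.
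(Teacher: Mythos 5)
Your proof is correct and follows essentially the same route as the paper's: both differentiate (you via $\ln f$, the paper via $f$ directly) and reduce to the same inequality $1+\varepsilon+\tfrac{1}{k} \le (2+\varepsilon)\ln(2+\varepsilon)$, which is your $h(\varepsilon)\le 0$ after clearing the positive factor $2+\varepsilon$. The only difference is in the final verification --- the paper checks $\varepsilon=0$ and compares derivatives ($1+\ln(2+\varepsilon)>1$), while you substitute $1/k\le 1/3$ and use the tangent-line bound $\ln(2+\varepsilon)\ge \ln 2+\tfrac{\varepsilon}{2+\varepsilon}$; both closings are routine and valid.
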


\begin{proof} As $f$ is differentiable, it is enough to prove that the 
derivative of $f$ is not positive.

$$f'(\varepsilon)=\left( e^{\ln (2+\varepsilon)\frac{1}{1+\varepsilon+\frac{1}{k}}} \right)'=(2 + \varepsilon)^{\frac{1}{1+\varepsilon + 1/k}}\left( \frac{1}{(2+\varepsilon)(1+\varepsilon +\frac{1}{k})}- \frac{\ln (2+\varepsilon)}{(1+\varepsilon +\frac{1}{k})^{2}} \right),$$
so as $(2 + \varepsilon)^{\frac{1}{1+\varepsilon + 1/k}} \ge 0$, it is enough to prove that $$\frac{1}{(2+\varepsilon)(1+\varepsilon +\frac{1}{k})}- \frac{\ln (2+\varepsilon)}{(1+\varepsilon +\frac{1}{k})^{2}} \le 0.$$
Simplifying (and using that $1+\varepsilon +\frac{1}{k} \ge 0$ and $2+\varepsilon \ge 0$), we get
$$1+\varepsilon +\frac{1}{k} \le (2+\varepsilon) \ln (2+\varepsilon).$$

it is easy to check that for $\varepsilon=0$ the above inequality holds as $k \ge 3$. Now note that the derivative of the right hand side with respect to $\varepsilon$, namely $1+\ln(2+\varepsilon)$, is larger than the derivative of the left hand side, namely $1$. Therefore the above inequality holds for all $\varepsilon \ge 0$, and we are done.

\end{proof}

\end{document}